\numberwithin{equation}{section}
\newtheorem*{teor}{Theorem}
\newtheorem*{coro}{Corollary}
\theoremstyle{definition}
\def\rad{\operatorname{rad}}
\begin{document}

\title{A determinant of generalized Fibonacci numbers}

\author[Christian Krattenthaler]{Christian Krattenthaler$^\dagger$}
\address{Fakult\"at f\"ur Mathematik, Universit\"at Wien,
Nordbergstrasze~15, A-1090 Vienna, Austria.
WWW: {\tt http://www.mat.univie.ac.at/\lower0.5ex\hbox{\~{}}kratt}.}
\author{Antonio M. Oller-Marc\'{e}n}
\address{Centro Universitario de la Defensa\\
Ctra. de Huesca s/n, 50090 Zaragoza (Espa\~{n}a)} 
\email{oller@unizar.es}

\thanks{$^\dagger$Research partially supported by the Austrian
Science Foundation FWF, grants Z130-N13 and S9607-N13,
the latter in the framework of the National Research Network
``Analytic Combinatorics and Probabilistic Number
Theory"}

\keywords{generalized Fibonacci numbers, Cassini's identity,
  determinant evaluation}

\subjclass[2010]{Primary 11B39;
Secondary 05A10 05A19 11C20}

\begin{abstract}
We evaluate a determinant of generalized Fibonacci numbers, thus
providing a common generalization of several determinant evaluation
results that have previously appeared in the literature, all of them
extending Cassini's identity for Fibonacci numbers.
\end{abstract}

\maketitle

\section{Introduction}

The well-known {\it Fibonacci sequence} is given by
$f_n=f_{n-1}+f_{n-2}$ with $f_0=f_1=1$. Numerous properties of this
sequence are known. 
We refer the reader to the monograph \cite{VAJ} for a wealth of information
on this sequence. 
One of these properties is the so called Cassini identity, given by
$$f_nf_{n+2}-f_{n+1}^2=(-1)^n,$$
which can be written in matrix form as
\begin{equation}
\det\begin{pmatrix} f_n & f_{n+1}\\ f_{n+1} &
f_{n+2}\end{pmatrix}=(-1)^n.\label{casi}\end{equation} 
Miles \cite{MIL} introduced {\it $k$-generalized Fibonacci numbers} 
$f_n^{(k)}$ by
$$f_n^{(k)}=\sum_{i=0}^k f_{n-i}^{(k)},$$
with $f_n^{(k)}=0$ for every $0\leq n\leq k-2$, $f_{k-1}^{(k)}=1$, 
and he gave the following generalization of (\ref{casi}):
\begin{equation}
\det\begin{pmatrix}
f_n^{(k)} & f_{n+1}^{(k)} & \cdots & f_{n+k-1}^{(k)}\\
f_{n+1}^{(k)} & f_{n+2}^{(k)} & \cdots & f_{n+k}^{(k)}\\
\vdots & \vdots & \ddots & \vdots\\
f_{n+k-1}^{(k)} & f_{n+k}^{(k)} & \cdots & f_{n+2k-2}^{(k)}
\end{pmatrix}=(-1)^{\frac{(2n+k)(k-1)}{2}}.\label{casi2}
\end{equation}
More recently, Stakhov \cite{STA} has generalized Cassini's identity
for sequences of the form $f_n=f_{n-1}+f_{n-p-1}$. 

Hoggat and Lind \cite{HOG} consider the so called ``dying rabbit
problem'', previously introduced in \cite{AL1} and studied in
\cite{AL2} or \cite{COH}, which modifies the original Fibonacci
setting by letting rabbits die. In previous work  by one of the
authors \cite{OLL}, the sequence arising in this setting was studied
in detail. For instance, the recurrence relation for this sequence
depends on two parameters $k,\ell\geq 2$ and is given by 
$$C_n^{(k,\ell)}=C_{n-\ell}^{(k,\ell)}+C_{n-\ell-1}^{(k,\ell)}+\cdots+C_{n-k-\ell+1}^{(k,\ell)},$$
where $C_0^{(k,\ell)},\dots,C_{k+\ell-2}^{(k,\ell)}$ are initial values which will be
specified below. It was also proved that, if $r_1,\dots,r_{k+\ell-1}$
are the distinct roots of $g_{k,\ell}(x)=x^{k+\ell-1}-\frac{x^k-1}{x-1}$,
then the general term of the sequence is given by
$C_n^{(k,\ell)}=\displaystyle{\sum_{i=1}^{k+\ell-1} a_ir_i}$, with 
\begin{multline}\label{aes}
a_i=\frac{(-1)^{k+\ell+i-1}}{\displaystyle{\prod_{j>i}(r_j-r_i)
\prod_{j<i}(r_i-r_j)}}\\
\times\left(\sum_{l=0}^{k-2}
    C_l^{(k,\ell)}\frac{r_i^{l+1}-1}{r_i^{l+1}(r_i-1)}+\sum_{l=k-1}^{k+\ell-3}
    C_l^{(k,\ell)}\frac{r_i^k-1}{r_i^{l+1}(r_i-1)}+C_{k+\ell-2}^{(k,\ell)}\right).
\end{multline} 

Given the previous sequence, for every $j\geq 0$ we can define a
matrix $A_{j,k,\ell}$ by 
$$A_{j,k,\ell}=\begin{pmatrix}
C_j^{(k,\ell)} & C_{j+\ell}^{(k,\ell)} & C_{j+\ell+1}^{(k,\ell)} & \dots &
C_{j+k+2\ell-3}^{(k,\ell)}\\ 
C_{j+1}^{(k,\ell)} & C_{j+\ell+1}^{(k,\ell)} & C_{j+\ell+2}^{(k,\ell)} & \dots &
C_{j+k+2\ell-2}^{(k,\ell)}\\ 
\vdots & \vdots & \vdots & \ddots & \vdots\\
C_{j+k+\ell-2}^{(k,\ell)} & C_{j+k+2\ell-2}^{(k,\ell)} & C_{j+k+2\ell-1}^{(k,\ell)} &
\dots & C_{j+2k+3\ell-5}^{(k,\ell)}. 
\end{pmatrix}$$
The main goal of this paper will be to find an explicit expression for
$\det(A_{j,k,\ell})$, thus extending (\ref{casi}) and
(\ref{casi2}). 

\section{Extending Cassini's identity}

Before we proceed, we have to fix our initial conditions. In the
original setting \cite{OLL}, when we start with a pair of rabbits that
become mature $\ell$ months after their birth and die $k$ months after
their matureness, the $k+\ell-1$ initial conditions are given by
$C_0^{(k,\ell)}=\dots=C_{\ell-1}^{(k,\ell)}=1$ and
$C_n^{(k,\ell)}=C_{n-1}^{(k,\ell)}+C_{n-\ell}^{(k,\ell)}$ for every $\ell\leq n\leq
k+\ell-2$. Instead, in what follows we will consider the following
initial conditions: 
\begin{gather*}
\widetilde{C}_0^{(k,\ell)}=1,\\
\widetilde{C}_1^{(k,\ell)}=\dots=\widetilde{C}_{k-1}^{(k,\ell)}=0,\\
\widetilde{C}_k^{(k,\ell)}=\dots=\widetilde{C}_{k+\ell-2}^{(k,\ell)}=1.
\end{gather*}
Note that this change in the initial conditions results only in a
shift of indices. Namely, if $C_n^{(k,\ell)}$ denotes the original
sequence and $\widetilde{C}_n^{(k,\ell)}$ denotes the sequence given by
the same recurrence relation and these new initial conditions, then for every
$n\geq 0$ we have
$$C_n^{(k,\ell)}=\widetilde{C}_{n+k+1}^{(k,\ell)}.$$
Thus, if $\widetilde{A}_{j,k,\ell}$ is the corresponding matrix (defined
in the obvious way), we have $A_{j,k,\ell}=\widetilde{A}_{j+k+1,k,\ell}$.
Hence, we can focus on finding a formula for
$\det(\widetilde{A}_{j,k,\ell})$. 

First of all, observe that
$\det(\widetilde{A}_{j,k,\ell})=(-1)^{k+\ell-2}\det(\widetilde{A}_{j-1,k,\ell})$
because $\widetilde{A}_{j,k,\ell}$ is obtained from
$\widetilde{A}_{j-1,k,\ell}$ by replacing the first
row by the sum of the first $k$ rows of the matrix,
and then permuting the rows so that the first row becomes the
last one. If we apply this idea repeatedly, we obtain that
$\det(\widetilde{A}_{j,k,\ell})=(-1)^{j(k+\ell-2)}\det(\widetilde{A}_{0,k,\ell})$.
Hence, it is sufficient to compute this latter determinant. 

We shall focus now on computing this determinant, which explicitly is
$$\det(\widetilde{A}_{0,k,\ell})=\det\begin{pmatrix}
\widetilde{C}_0^{(k,\ell)} & \widetilde{C}_{\ell}^{(k,\ell)} &
\widetilde{C}_{\ell+1}^{(k,\ell)} & \dots &
\widetilde{C}_{k+2\ell-3}^{(k,\ell)}\\ 
\widetilde{C}_{1}^{(k,\ell)} & \widetilde{C}_{\ell+1}^{(k,\ell)} &
\widetilde{C}_{\ell+2}^{(k,\ell)} & \dots &
\widetilde{C}_{k+2\ell-2}^{(k,\ell)}\\ 
\vdots & \vdots & \vdots & \ddots & \vdots\\
\widetilde{C}_{k+\ell-2}^{(k,\ell)} & \widetilde{C}_{k+2\ell-2}^{(k,\ell)} &
\widetilde{C}_{k+2\ell-1}^{(k,\ell)} & \dots &
\widetilde{C}_{2k+3\ell-5}^{(k,\ell)} 
\end{pmatrix}.$$
To do so, recall that we have
$\widetilde{C}_n^{(k,\ell)}=\displaystyle{\sum_{s=1}^{k+\ell-1}a_sr_s^n}$,
where the $a_i$'s are given by (\ref{aes}). We substitute this in the
above determinant and use multilinearity in the columns to expand it
into the sum 
$$\sum_{1\leq s_1,\dots,s_{k+\ell-1}\leq k+\ell-1}\left(\prod_{j=1}^{k+\ell-1}
a_{s_j}\right) \det_{1\leq i\leq k+\ell-1}
\left(r_{s_1}^{i-1}\ r_{s_2}^{i+\ell-1}\ r_{s_3}^{i+\ell}\ \cdots\ 
r_{s_{k+\ell-1}}^{i+k+2\ell-4}\right).$$    
Now, if in this sum two of the $s_j$'s should equal each other, then
the corresponding two columns in the determinant would be dependent so
that the determinant would vanish. We can therefore restrict the sum
to permutations of $\{1,2,\dots,k+\ell-1\}$. With $S_{k+\ell-1}$ denoting
the set of these permutations, this leads to 
{\allowdisplaybreaks
\begin{align}
\notag
\det(\widetilde{A}_{0,k,\ell})&=\sum_{\sigma\in
  S_{k+\ell-1}}\left(\prod_{j=1}^{k+\ell-1} a_{\sigma(j)}\right)\det_{1\leq
  i\leq k+\ell-1}
\left(r_{\sigma(1)}^{i-1}\ r_{\sigma(2)}^{i+\ell-1}\ r_{\sigma(3)}^{i+\ell}\ 
\cdots\ r_{\sigma(k+\ell-1)}^{i+k+2\ell-4}\right)\\ 
\notag 
&=\left(\prod_{j=1}^{k+\ell-1} a_{j}\right) \sum_{\sigma\in S_{k+\ell-1}}
\left(\prod_{j=2}^{k+\ell-1} r_{\sigma(j)}^{\ell+j-2}\right)\det_{1\leq
  i,j\leq k+\ell-1} \left(r_{\sigma(j)}^{i-1}\right)\\ 
\notag 
&=\left(\prod_{j=1}^{k+\ell-1} a_{j}\right) \sum_{\sigma\in S_{k+\ell-1}}
(\textrm{sgn}\ \sigma)\left(\prod_{j=2}^{k+\ell-1}
r_{\sigma(j)}^{\ell+j-2}\right)\det_{1\leq i,j\leq k+\ell-1}
\left(r_{j}^{i-1}\right)\\ 
\notag 
&=\left(\prod_{j=1}^{k+\ell-1} a_{j}\right)\left(\prod_{1\leq i<j\leq
  k+\ell-1} (r_j-r_i)\right)\sum_{\sigma\in S_{k+\ell-1}}
(\textrm{sgn}\ \sigma)\left(\prod_{j=2}^{k+\ell-1}
r_{\sigma(j)}^{\ell+j-2}\right)\\ 
\notag 
&=\left(\prod_{j=1}^{k+\ell-1} a_{j}\right)\left(\prod_{1\leq i<j\leq
  k+\ell-1} (r_j-r_i)\right)\det_{1\leq i\leq
  k+\ell-1}\left(1\ r_i^\ell\ r_i^{\ell+1}\ \cdots\ r_{i}^{k+2\ell-3}\right)\\ 
\notag 
&=\left(\prod_{j=1}^{k+\ell-1} a_{j}\right)\left(\prod_{1\leq i<j\leq
  k+\ell-1}
(r_j-r_i)\right)\left(\prod_{i=1}^{k+\ell-1}r_i\right)^{k+2\ell-3}\\ 
\notag 
&\hspace{2.5cm}
\times\det_{1\leq i\leq
  k+\ell-1}\left(r_i^{-k-2\ell+3}\ r_i^{-k-\ell+3}\ r_i^{-k-\ell+4}\ \cdots\ 1\right)\\ 
\notag 
&=\left(\prod_{j=1}^{k+\ell-1} a_{j}\right)\left(\prod_{1\leq i<j\leq
  k+\ell-1}
(r_j-r_i)(r_i^{-1}-r_j^{-1})\right)\left(\prod_{i=1}^{k+\ell-1}r_i\right)^{k+2\ell-3}\\ &\hspace{2.5cm}
\times h_{\ell-1}(r_1^{-1},\dots,r_{k+\ell-1}^{-1}). 
\label{prod}
\end{align}}%
In the last line we have used the following notations and facts: first
of all, $h_m(x_1,\dots, x_N)$ denotes the $m$-th complete homogeneous
symmetric function in $N$ variables $x_1,\dots, x_N$, explicitly given
by 
$$h_m(x_1,\dots,x_N)=\sum_{1\leq i_1\leq\cdots\leq i_m\leq N}
x_{i_1}\cdots x_{i_m}.$$ 
Furthermore, the Schur function indexed by a partition
$\lambda=(\lambda_1,\dots,\lambda_N)$ in the variables $x_1,\dots,x_N$
is defined by 
$$s_{\lambda}(x_1,\dots,x_N)=\frac{\displaystyle{\det_{1\leq i,j\leq
      N}\left(x_i^{\lambda_j+N-j}\right)}}{\displaystyle{\det_{1\leq
      i,j\leq
      N}\left(x_i^{N-j}\right)}}=\frac{\displaystyle{\det_{1\leq
      i,j\leq
      N}\left(x_i^{\lambda_j+N-j}\right)}}{\displaystyle{\prod_{1\leq
      i<j\leq N}(x_i-x_j)}}.$$ 
It is well-known (cf.\ \cite[p.~41, Eq.~(3.4)]{MAC}) that for
$\lambda=(m,0,\dots,0)$ the Schur function
$s_{\lambda}(x_1,\dots,x_N)$ reduces to $h_m(x_1,\dots,x_N)$. These
facts together explain the last line in the above computation. 

To proceed further, let us first observe that, by reading off the
constant coefficient of $g_{k,\ell}(x)$, we obtain 
$$\prod_{i=1}^{k+\ell-1} r_i=(-1)^{k+\ell}.$$
Furthermore, we have
\begin{align*}
g_{k,\ell}(x)&=x^{k+\ell-1}-\frac{x^k-1}{x-1}=\prod_{i=1}^{k+\ell-1}(x-r_i)
=(-1)^{k+\ell-1}\prod_{i=1}^{k+\ell-1}
r_i(1-r_i^{-1}x)\\ 
&=-\prod_{i=1}^{k+\ell-1}(1-r_i^{-1}x).
\end{align*}
Hence, we obtain
\begin{align*}
\sum_{m=0}^{\infty}h_m(r_1^{-1},\dots,r_{k+\ell-1}^{-1})x^m
&=\frac{1}{\displaystyle{\prod_{i=1}^{k+\ell-1}(1-r_i^{-1}x)}}\\  
&=\frac{1}{\frac{x^k-1}{x-1}-x^{k+\ell-1}}\\
&=\frac{1-x}{1-x^k-x^{k+\ell-1}+x^{k+\ell}}\\
&=1-x+x^k-x^{k+1}+\cdots+O(x^{k+\ell-1}).
\end{align*}
In order to evaluate $h_{\ell-1}(r_1^{-1},\dots,r_{k+\ell-1}^{-1})$, we just
have to extract the coefficient of $x^{\ell-1}$ in the expansion on the
right-hand side. This is easy: if $\ell-1$ equals a multiple of $k$ then
we obtain 1, if $\ell-2$ equals a multiple of $k$ then we obtain $-1$,
and in all other cases we obtain 0. 

We continue evaluating the other factors in (\ref{prod}). We have
\begin{align*}
\prod_{1\leq i<j\leq k+\ell-1} (r_j-r_i)(r_i^{-1}-r_j^{-1})&=\prod_{1\leq
  i<j\leq k+\ell-1}\frac{(r_j-r_i)^2}{r_ir_j}\\ 
&=\frac{\displaystyle{\prod_{1\leq i<j\leq
      k+\ell-1}(r_j-r_i)^2}}{\left(\displaystyle{\prod_{i=1}^{k+\ell-1}
    r_i}\right)^{k+\ell-2}}\\ 
&=\frac{\displaystyle{\prod_{1\leq i<j\leq k+\ell-1}(r_j-r_i)^2}}{(-1)^{k+\ell}}.
\end{align*}

Furthermore, we must compute $\displaystyle{\prod_{j=1}^{k+\ell-1}
  a_j}$. To begin with, recall the formula (\ref{aes}) and the fact that
$\widetilde{C}_0^{(k,\ell)}=\widetilde{C}_k^{(k,\ell)}=\dots
=\widetilde{C}_{k+\ell-2}^{(k,\ell)}=1$ and
$\widetilde{C}_1^{(k,\ell)}=\dots=\widetilde{C}_{k-1}^{(k,\ell)}=0$. 
With this in mind, we get 
\begin{align*}\prod_{j=1}^{k+\ell-1}
  a_j&=\frac{\displaystyle{\prod_{j=1}^{k+\ell-1}(-1)^{k+\ell+j-1}}}
{\displaystyle{\prod_{1\leq
        i<j\leq
        k+\ell-1}(r_j-r_i)^2}}\prod_{j=1}^{k+\ell-1}\left(\frac{r_j-1}{r_j(r_j-1)}
+\sum_{i=1}^{\ell-2}\frac{r_j^k-1}{r_j^{k+i}(r_j-1)}+1\right)\\  
&=\frac{(-1)^{\frac{(3k+3\ell-2)(k+\ell-1)}{2}}}{\displaystyle{\prod_{1\leq
      i<j\leq
      k+\ell-1}(r_j-r_i)^2}}\prod_{j=1}^{k+\ell-1}\left(\frac {1} {r_j}
+\sum_{i=1}^{\ell-2}\frac{r_j^k-1}{r_j^{k+i}(r_j-1)}+1\right).
\end{align*} 
Moreover, observe that
\begin{align*}
\frac {1} {r_j}
+\sum_{i=1}^{\ell-2}\frac{r_j^k-1}{r_j^{k+i}(r_j-1)}+1
&=\frac{1}{r_j}+\frac{r_j^k-1}{r_j-1}\sum_{i=1}^{\ell-2}\frac{1}{r_j^{k+i}}+1\\ 
&=\frac{1}{r_j}+r_j^{k+\ell-1}\sum_{i=1}^{\ell-2}\frac{1}{r_j^{k+i}}+1\\
&=\frac{r_j^\ell-1}{r_j(r_j-1)}.
\end{align*}
Here, to obtain the second line, 
we have used the fact that $1\neq r_j$ is a root of
$x^{k+\ell-1}-\frac{x^k-1}{x-1}$. 

Now, to conclude we must compute
$\displaystyle{\prod_{j=1}^{k+\ell-1}\frac{r_j^\ell-1}{r_j(r_j-1)}}$. To do
so, let $\omega$ be a primitive $\ell$-th root of unity. Then 
\begin{align*}
\prod_{j=1}^{k+\ell-1}(r_j^\ell-1)&=\prod_{j=1}^{k+\ell-1}
\prod_{i=1}^{\ell}(r_j-\omega^{i})=\prod_{i=1}^{\ell}
\prod_{j=1}^{k+\ell-1}(r_j-\omega^{i})\\ 
&=\left(\prod_{j=1}^{k+\ell-1}(r_j-1)\right)\left(\prod_{i=1}^{\ell-1}
\prod_{j=1}^{k+\ell-1}(r_j-\omega^{i})\right)\\
&=\left(\prod_{j=1}^{k+\ell-1}(r_j-1)\right)(-1)^{(k+\ell-1)(\ell-1)}
\left(\prod_{i=1}^{\ell-1}g_{k,\ell}(\omega^{i})\right).
\end{align*}
Furthermore,
$g_{k,\ell}(\omega^{i})=\omega^{i(k+\ell-1)}-\frac{\omega^{ik}-1}{\omega^{i}-1}=-\frac{\omega^{i(k-1)}-1}{\omega^{i}-1}$. Consequently,
we have 
\begin{align*}
\prod_{j=1}^{k+\ell-1}\frac{r_j^\ell-1}{r_j(r_j-1)}&=(-1)^{(k+\ell)\ell}
\left(\prod_{i=1}^{\ell-1}\frac{\omega^{i(k-1)}-1}{\omega^{i}-1}\right). 
\end{align*}
Finally observe that
$$\prod_{i=1}^{\ell-1}\frac{\omega^{i(k-1)}-1}{\omega^{i}-1}=\begin{cases}
1, & \textrm{if $\gcd(\ell,k-1)=1$};\\
0, & \textrm{otherwise}.
\end{cases}
$$

We can now collect all the work done to obtain the following result.

\begin{teor}
For all integers $k$ and $\ell$ with $k,\ell\ge2$, we have
$$\det(\widetilde{A}_{0,k,\ell})=\begin{cases}
(-1)^{\frac{(k+\ell)(k+\ell-1)}{2}+1}, & \textrm{if $\ell-1=\alpha k$ and
    $\gcd(\ell,k-1)=1$};\\ 
(-1)^{\frac{(k+\ell)(k+\ell-1)}{2}}, & \textrm{if $\ell-2=\beta k$ and
    $\gcd(\ell,k-1)=1$};\\ 
0, & \textrm{otherwise}.
\end{cases}
$$
\end{teor}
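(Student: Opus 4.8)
The plan is to substitute into the expansion (\ref{prod}) the individual evaluations of each of its constituent factors derived above, and then to reconcile the outcome with the claimed formula. First I would combine the two factors that carry Vandermonde products: the computation above shows that $\prod_{1\le i<j\le k+\ell-1}(r_j-r_i)(r_i^{-1}-r_j^{-1})$ equals $\big(\prod_{i<j}(r_j-r_i)^2\big)/(-1)^{k+\ell}$, whereas $\prod_{j=1}^{k+\ell-1}a_j$ equals $(-1)^{(3k+3\ell-2)(k+\ell-1)/2}\big/\prod_{i<j}(r_j-r_i)^2$ times $\prod_{j=1}^{k+\ell-1}\frac{r_j^\ell-1}{r_j(r_j-1)}$, so the two squared Vandermonde determinants cancel. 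Using in addition $\prod_i r_i=(-1)^{k+\ell}$ to evaluate the power $\big(\prod_i r_i\big)^{k+2\ell-3}$ in (\ref{prod}), what remains is a pure power of $-1$ multiplied by $\prod_{j}\frac{r_j^\ell-1}{r_j(r_j-1)}$ and by $h_{\ell-1}(r_1^{-1},\dots,r_{k+\ell-1}^{-1})$.

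Next I would insert the two arithmetic evaluations obtained above. On one hand, $h_{\ell-1}(r_1^{-1},\dots,r_{k+\ell-1}^{-1})$ is $1$ when $\ell-1$ is a multiple of $k$, is $-1$ when $\ell-2$ is a multiple of $k$ (these two possibilities are mutually exclusive since $k\ge2$), and is $0$ otherwise. On the other hand, $\prod_{j}\frac{r_j^\ell-1}{r_j(r_j-1)}=(-1)^{(k+\ell)\ell}\prod_{i=1}^{\ell-1}\frac{\omega^{i(k-1)}-1}{\omega^i-1}$, and the latter product is $1$ if $\gcd(\ell,k-1)=1$ and $0$ otherwise. Hence the determinant vanishes unless both $\gcd(\ell,k-1)=1$ and one of $\ell-1,\ell-2$ is a multiple of $k$; in the two surviving regimes it equals $\pm(-1)^{E}$, with $E=\tfrac{(3k+3\ell-2)(k+\ell-1)}{2}+(k+\ell)(k+3\ell-4)$, the minus sign occurring exactly when $\ell-2=\beta k$ (coming from $h_{\ell-1}=-1$).

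The one genuinely delicate point is the reduction of the exponent $E$ modulo $2$; this is where an accidental off-by-one among the several $(-1)$-exponents collected along the way would be easiest to commit, so I would carry it out carefully. Setting $N=k+\ell$, one has $(k+\ell)(k+3\ell-4)=N^2+2N(\ell-2)\equiv N\pmod2$, while $\tfrac{(3N-2)(N-1)}{2}=N(N-1)+\binom{N-1}{2}\equiv\binom{N-1}{2}\pmod2$ because $N(N-1)$ is even; together with Pascal's rule $\binom{N}{2}=\binom{N-1}{2}+(N-1)$ this yields $E\equiv\binom{N-1}{2}+N\equiv\binom{N}{2}+1\pmod2$, i.e.\ $E\equiv\tfrac{(k+\ell)(k+\ell-1)}{2}+1$. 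Feeding this back in, the case $\ell-1=\alpha k$, $\gcd(\ell,k-1)=1$ gives $(-1)^{\frac{(k+\ell)(k+\ell-1)}{2}+1}$, the case $\ell-2=\beta k$, $\gcd(\ell,k-1)=1$ gives $(-1)^{\frac{(k+\ell)(k+\ell-1)}{2}+1}\cdot(-1)=(-1)^{\frac{(k+\ell)(k+\ell-1)}{2}}$, and all remaining cases give $0$ --- which is precisely the assertion of the theorem.
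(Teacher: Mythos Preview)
Your proof is correct and follows exactly the route the paper takes: you substitute the individual evaluations of the four factors in \eqref{prod}, let the squared Vandermonde products cancel, and then reduce the accumulated $(-1)$-exponent modulo~$2$. In fact you carry out explicitly the one step the paper leaves to the reader, namely the parity bookkeeping showing that $E\equiv\binom{k+\ell}{2}+1\pmod 2$; your identity $\tfrac{(3N-2)(N-1)}{2}=N(N-1)+\binom{N-1}{2}$ together with Pascal's rule is a clean way to do this.
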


\begin{coro}
Let $k_0,\ell_0\ge2$ be any integers. Then the following hold:
\begin{itemize}
\item[i)] The sequence $\{\alpha_\ell\}_{\ell\geq 2}$
given by $\alpha_\ell=|\det(\widetilde{A}_{0,k_0,\ell})|$ is
periodic, and its period is a divisor of $k_0\cdot\rad(k_0-1)$. 
\item[ii)] The sequence $\{\beta_k\}_{k\geq k}$ given by $\beta_k=|\det(\widetilde{A}_{0,k,\ell_0})|$ is eventually zero.
\end{itemize}
\end{coro}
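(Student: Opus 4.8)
The plan is to read off both statements directly from the Theorem. The Theorem tells us that $|\det(\widetilde A_{0,k,\ell})|$ always lies in $\{0,1\}$, and that it equals~$1$ exactly when $k$ divides $\ell-1$ or $\ell-2$ \emph{and} simultaneously $\gcd(\ell,k-1)=1$. So the corollary is purely a statement about how this $\{0,1\}$-valued function of the pair $(k,\ell)$ behaves when one of the two parameters is frozen; no further determinant manipulation is needed, only elementary number theory.

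\emph{Part~i).} Here I would fix $k=k_0$ and analyse the two conditions on~$\ell$ separately. The divisibility condition ``$k_0\mid\ell-1$ or $k_0\mid\ell-2$'' manifestly depends only on the residue of~$\ell$ modulo~$k_0$. For the coprimality condition I would invoke the elementary fact that $\gcd(\ell,k_0-1)=1$ holds if and only if no prime divisor of $k_0-1$ divides~$\ell$; since divisibility of~$\ell$ by a prime~$p$ depends only on $\ell\bmod p$, and every prime dividing $k_0-1$ also divides $\rad(k_0-1)$, this condition depends only on the residue of~$\ell$ modulo $\rad(k_0-1)$. Therefore $\alpha_\ell$ depends only on the residue of~$\ell$ modulo $N:=\operatorname{lcm}\bigl(k_0,\rad(k_0-1)\bigr)$, so that $\alpha_{\ell+N}=\alpha_\ell$ for all $\ell\ge2$; hence $\{\alpha_\ell\}_{\ell\ge2}$ is periodic. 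Since the minimal period of a periodic sequence divides every period and $N\mid k_0\cdot\rad(k_0-1)$, the period of $\{\alpha_\ell\}_{\ell\ge2}$ divides $k_0\cdot\rad(k_0-1)$, as claimed.

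\emph{Part~ii).} Here I would fix $\ell=\ell_0$ and observe that, by the Theorem, $\beta_k\neq0$ can occur only if $k$ divides $\ell_0-1$ or $\ell_0-2$. For $\ell_0\ge3$ these are fixed positive integers, hence have only finitely many divisors, all at most $\ell_0-1$; consequently $\beta_k=0$ for every $k\ge\ell_0$, which is exactly the assertion. (The boundary case $\ell_0=2$ is degenerate, since then $\ell_0-2=0$ is divisible by every~$k$; it should be read under the convention that the exponents $\alpha,\beta$ in the Theorem are positive integers --- equivalently, by restricting to $\ell_0\ge3$ --- and then $\beta_k\equiv0$.)

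Both arguments are short, and I do not anticipate a genuine obstacle. The one point that merits an explicit sentence is the step used in Part~i): that coprimality to a fixed integer~$m$ is a condition on the residue class of~$\ell$ modulo $\rad(m)$ rather than modulo~$m$. It is precisely this observation that sharpens the naive period bound $k_0(k_0-1)$ down to $k_0\cdot\rad(k_0-1)$.
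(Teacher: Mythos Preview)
Your approach is essentially the paper's: both parts are read off directly from the Theorem. For part~i) your formulation is actually tighter than the paper's own argument, which only checks explicitly that $\alpha_\ell=0$ forces $\alpha_{\ell+k_0\cdot\rad(k_0-1)}=0$; your remark that each of the two conditions depends only on a residue class yields both implications at once (and, since $\gcd(k_0,k_0-1)=1$, your $N=\operatorname{lcm}(k_0,\rad(k_0-1))$ in fact equals $k_0\cdot\rad(k_0-1)$). You are also right to flag $\ell_0=2$ in part~ii): with $\beta\ge0$ --- which is what the derivation of the Theorem actually gives --- the condition $\ell_0-2=\beta k$ holds for every $k$, and then $\beta_k=1$ for all even~$k$, so the sequence is \emph{not} eventually zero; the paper glosses over this, and the statement should be read with $\ell_0\ge3$. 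Note, however, that your alternative fix of requiring $\alpha,\beta\ge1$ in the Theorem is not viable, since it would make the Theorem itself false at $\ell=2$.
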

\begin{proof}
\begin{itemize}
\item[i)] Clearly $\gcd(\ell,k_0-1)>1$ implies that
$\gcd(\ell+k_0\cdot\rad(k_0-1),k_0-1)>1$. In the same way, if $\ell-1$ and
$\ell-2$ are not multiples of $k_0$, then neither are $\ell+k_0\cdot\rad(k_0-1)-1$
or $\ell+k_0\cdot\rad(k_0-1)-2$. Consequently, if $\alpha_\ell=0$, also
$\alpha_{\ell+k_0\cdot\rad(k_0-1)}=0$ as claimed. 
\item[ii)] If $k\geq \ell_0$ obviously neither $\ell-1$ nor $\ell-2$ can be multiples of $k$ and therefore $\beta_{k}=0$ for every $k\geq \ell_0$.
\end{itemize}
\end{proof}

\end{document}